\numberwithin{equation}{section}
\newtheorem{theorem}{Theorem}[section]
\newtheorem{lemma}[theorem]{Lemma}
\newtheorem{corollary}[theorem]{Corollary}
\newtheorem{question}[theorem]{Question}
\theoremstyle{definition}
\newtheorem{definition}[theorem]{Definition} 
\newtheorem{remark}[theorem]{Remark}
\newtheorem{example}[theorem]{Example}
\newcommand{\K}{\mathbb{K}}
\DeclareMathOperator{\Ind}{Ind}
\DeclareMathOperator{\cha}{char}
\DeclareMathOperator{\Ann}{Ann}
\newcommand{\qand}{\quad \mbox{and} \quad}
\newcommand{\qfor}{\quad \mbox{for} \quad}
\newcommand{\qwhere}{\quad \mbox{where} \quad}
\newcommand{\bff}{\mathbf{f}}
\begin{document}
 
\title{The Weak Lefschetz property of whiskered graphs}

\author[S.~M.~Cooper]{Susan M. Cooper}
\address[S. M. Cooper]
{Department of Mathematics\\
University of Manitoba\\
520 Machray Hall\\
186 Dysart Road\\
Winnipeg, MB\\
Canada R3T 2N2}
\email{susan.cooper@umanitoba.ca}

\author[S. Faridi]{Sara Faridi}
\address[S. Faridi]
{Department of Mathematics \& Statistics,
Dalhousie University,
6316 Coburg Rd.,
PO BOX 15000,
Halifax, NS,
Canada B3H 4R2
}
\email{faridi@dal.ca}

\author[T. Holleben]{Thiago Holleben}
\address[T. Holleben]
{Department of Mathematics \& Statistics,
Dalhousie University,
6316 Coburg Rd.,
PO BOX 15000,
Halifax, NS,
Canada B3H 4R2}
\email{hollebenthiago@dal.ca}

\author[L. Nicklasson]{Lisa Nicklasson}
\address[L. Nicklasson]
{Division of Mathematics and Physics,
Mälardalen University,
Box 883, 721 23 Västerås, Sweden}
\email{lisa.nicklasson@mdu.se}

\author[A. Van Tuyl]{Adam Van Tuyl}
\address[A. Van Tuyl]
{Department of Mathematics and Statistics\\
McMaster University, Hamilton, ON, L8S 4L8}
\email{vantuyl@math.mcmaster.ca}

\keywords{Weak Lefschetz property, graded Artinian rings, whiskered graphs, pseudo-manifolds}
\subjclass[2020]{13E10, 13F20, 13F55, 05E45}

 
\begin{abstract} 
We consider Artinian level algebras arising from the whiskering of a graph. Employing a result by Dao-Nair we show that multiplication by a general linear form has maximal rank in degrees 1 and $n-1$ when the characteristic is not two, where $n$ is the number of vertices in the graph. Moreover, the multiplication is injective in degrees $<n/2$ when the characteristic is zero, following a proof by Hausel. Our result in the characteristic zero case
is optimal in the sense that there are whiskered graphs for which the multiplication maps in all intermediate
degrees $n/2,\ldots,n-2$ of the associated Artinian algebras fail to have maximal rank, and 
consequently, the weak Lefschetz property.
\end{abstract}
\maketitle


\section{Introduction}

A graded Artinian algebra $A=A_0 \oplus A_1 \oplus \dots 
\oplus A_t$ has the \emph{weak Lefschetz property (WLP)} 
if there is an $\ell \in A_1$ such that the multiplication 
maps $\times \ell: A_i \to A_{i+1}$ all have maximal rank, 
i.e., are injective or surjective. 
When studying the WLP it is natural to consider standard 
graded algebras, presented as $A=R/I$ where 
$R=\mathbb{K}[x_1,\ldots,x_n]$ is a polynomial ring over a 
field $\mathbb{K}$ and $I$ is a homogeneous ideal. As was 
first pointed out in \cite{MMN2011}, when $I$ is a monomial 
ideal, $A$ has the WLP if and only if the multiplication maps 
induced by $\ell=x_1+ \cdots +x_n$ have maximal rank. A 
recent contribution to the investigation of WLP 
for monomial ideals is work  of Dao and Nair 
\cite{DN2021} where the  Stanley-Reisner ideals $I_{\Delta}$ 
together with the squares  of the variables are considered over a field of characteristic zero. Their results, which are described in
detail in \cref{sec:background}, complement 
previous work by Michałek and Mir\'{o}-Roig \cite{MM2016}, 
and Migliore, Nagel and Schenck \cite{MNS2020}, both studying 
the WLP of quadratic and cubic monomial ideals using 
Togliatti systems.  Dao and Nair's work also complements work by
Cook, Migliore, Nagel and 
Zanello \cite{CMNZ2016} that
relates the WLP of monomial
Artinian algebras to problems in incidence geometry.

A quadratic monomial ideal defining an Artinian algebra can 
be interpreted as an edge ideal of a graph together with the 
squares of the variables. More precisely, let 
$G = (V,E)$ be a finite simple graph on the vertex set 
$V = \{x_1,\ldots,x_n\}$ and edge set $E$. The 
{\it edge ideal} $I(G) = \langle x_ix_j \mid \{x_i,x_j\} 
\in E \rangle$ defines an Artinian algebra 
$A(G) = R/(\langle x_1^2,\ldots,x_n^2 \rangle + I(G))$, called the Artinian
algebra of $G$.  We are interested in the following general question:

\begin{question}\label{question}
    For which graphs $G$ does $A(G)$ have the
    WLP?  If $A(G)$ does not have the WLP,
    in which degrees do the multiplication maps fail to 
    have maximal rank?
\end{question}

 The WLP of such algebras $A(G)$ have been studied in \cite{NT2022} and \cite{T2021}, where they classify the WLP for some special classes of graphs including paths, cycles, wheel graphs, and star graphs.  However, our understanding of \cref{question} is far from complete; we contribute to this question by considering 
 \emph{whiskered graphs} (see \cref{defn.whisker}). Artinian algebras defined by whiskered graphs are particularly interesting from the Lefschetz properties perspective, as they are level.

In \cite{TH2023}*{Theorem 58} Dao and Nair's result \cite{DN2021} is generalized to hold over any field of characteristic different from $2$.
Our contribution in this short note, summarized in Theorem \ref{maintheorem1} below, is an application of this result together with a result by Hausel \cite{Hausel} to whiskered graphs. Note that a whiskered graph always has $2n$ vertices and at least $n$ edges, for some number $n$, by construction. The socle degree of the corresponding Artinian algebra is $n$, meaning that the algebra is of the form $A_0 \oplus A_1 \oplus \cdots \oplus A_n$ with $A_n \ne \{0 \}$.

\begin{theorem}[\cref{cor.maincor,c:char}]\label{maintheorem1}
Let $G$ be a whiskered graph with $2n$ vertices and at least $n+1$ edges, and 
let $A=A(G)$ be the Artinian algebra of $G$ over a field $\K$ where $\cha(\K)\neq 2$. If $\ell$ is the sum of the variables of $A$, then $\times \ell: A_i \to A_{i+1}$ has maximal rank when $i=1$ or $n-1$. In the case $\cha(\K)=0$, the map 
$\times \ell$ is injective for all $i<n/2$.
\end{theorem}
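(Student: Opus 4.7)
My plan is to express $A(G)$ in Stanley--Reisner form and then invoke two external results: the characteristic $\neq 2$ extension of Dao--Nair in \cite{TH2023}*{Theorem 58} for the extremal multiplication maps, and Hausel's \cite{Hausel} positivity argument for injectivity below the middle degree in characteristic zero. Since the minimal non-faces of the independence complex $\Delta(G)$ of $G$ are the edges of $G$, one has $I(G)=I_{\Delta(G)}$ and so
\[
A(G) = R\big/\bigl(I_{\Delta(G)} + \langle x_1^2,\ldots,x_{2n}^2 \rangle\bigr),
\]
with $R = \K[x_1,\ldots,x_{2n}]$. The first observation is that every maximal independent set of the whiskered graph $G$ picks exactly one vertex from each of the $n$ whisker pairs, hence has size $n$; thus $\Delta(G)$ is pure of dimension $n-1$, $A(G)$ is level with socle in degree $n$, and the statement about the two extremal maps $\times\ell\colon A_1 \to A_2$ and $\times\ell\colon A_{n-1}\to A_n$ is well posed.

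For $i=1$ and $i=n-1$ I would apply \cite{TH2023}*{Theorem 58}. The essential hypothesis is a pseudo-manifold-like condition on $\Delta(G)$, namely that every codimension-one face lies in at most two facets. For a whiskered graph this is immediate: any independent set $F$ of size $n-1$ meets exactly $n-1$ of the $n$ whisker pairs, and if $\{x_i, y_i\}$ denotes the unique missed pair, then any extension of $F$ to a facet of $\Delta(G)$ must add either $x_i$ or $y_i$, so $F$ lies in at most two facets. The hypothesis $|E(G)| \geq n+1$ is used here to exclude the degenerate disjoint-whisker case in which the algebra factors as a tensor product of pieces $\K[x_i,y_i]/(x_i^2, y_i^2, x_iy_i)$ and the Hilbert function is no longer symmetric.

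For characteristic zero, I would follow Hausel's argument \cite{Hausel} to prove injectivity of $\times\ell\colon A_i\to A_{i+1}$ for $i<n/2$. The key input is a positivity statement available only in characteristic zero: a certain symmetric bilinear form on $A_i$, built from the multiplication structure of $A$ and a suitable functional on the top degree $A_n$, can be shown to be positive definite, and its non-degeneracy forces $\times\ell$ to be injective on $A_i$. The principal obstacle in both halves of the argument is combinatorial, namely matching the whisker structure of $G$ with the precise hypotheses of the two cited theorems; once this matching is done, the theorem follows formally from the two external inputs.
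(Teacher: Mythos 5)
Your overall architecture (purity and levelness of $A(G)$, a pseudo-manifold condition on $\Ind(G)$ feeding into the characteristic $\neq 2$ generalization of Dao--Nair for the extremal map, and Hausel for the low degrees) matches the paper's, but there are genuine gaps exactly where the work happens. The most serious one concerns $\times \ell\colon A_{n-1}\to A_n$: the condition you verify --- every $(n-2)$-face lies in at most two facets --- is not sufficient. The criterion (\cref{thm.daonair} and its characteristic $\neq 2$ analogue) requires $\Ind(G)$ to be a pseudo-manifold in the full sense of \cref{defn.pseudo}, including the facet-connectivity condition (3), which you do not check (the paper checks it by walking any facet $\{x_1,\dots,x_i,y_{i+1},\dots,y_n\}$ down to the all-whisker facet $\{y_1,\dots,y_n\}$ one vertex at a time), \emph{and} it additionally requires either that $\Ind(G)$ has a boundary or that its dual graph is non-bipartite. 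You never exhibit a boundary face, and your stated reason for the hypothesis $|E(G)|\ge n+1$ --- that otherwise the Hilbert function fails to be symmetric --- is incorrect: for two disjoint whiskers the Hilbert function is $(1,4,4)$, perfectly symmetric, yet $\times\ell\colon A_1\to A_2$ fails maximal rank precisely because $\Ind(G)$ is a boundaryless pseudo-manifold (a $4$-cycle) with bipartite dual graph. The actual role of $|E(G)|\ge n+1$ is that $H$ then has an edge, hence a maximal independent set $D=\{x_1,\dots,x_i\}$ with $i<n$, and the $(n-2)$-face $\{x_1,\dots,x_i,y_{i+1},\dots,y_{n-1}\}$ lies in exactly one facet (namely its union with $\{y_n\}$, since $x_n$ is adjacent to some vertex of $D$); this is the boundary face the criterion needs.

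Two further points. For $n\ge 3$ the map $\times\ell\colon A_1\to A_2$ is not the top map and is not governed by the pseudo-manifold condition at all, so your plan to treat $i=1$ and $i=n-1$ uniformly does not work: in characteristic $0$ the paper gets $i=1$ from the injectivity statement ($1<n/2$), and in characteristic $p\neq 2$ it reduces to characteristic $0$ via \cite{TH2023} after verifying $f_0\le f_1$. Finally, the characteristic-zero injectivity for $i<n/2$ cannot rest on the assertion that ``a certain symmetric bilinear form can be shown to be positive definite'': the argument the paper extracts from \cite{Hausel} (see \cref{lemma:monomial_level_injective}) is that, because $A$ is level of socle degree $n$ with monomial defining ideal, one has $I=\Ann(m_1)\cap\dots\cap\Ann(m_s)$ for squarefree degree-$n$ monomials $m_j$, each $R/\Ann(m_j)$ is a monomial complete intersection with the strong Lefschetz property, and injectivity of $\times\ell^{\,n-2k}\colon A_k\to A_{n-k}$ (hence of $\times\ell\colon A_k\to A_{k+1}$) follows by intersecting kernels. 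Levelness, which you record but never use, is exactly what makes this decomposition available; without carrying out some version of this argument the second half of the theorem is unproved.
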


Our proof of \cref{maintheorem1} relies on showing that
the independence complex of a whiskered
graph, a simplicial complex
whose faces correspond to the independent sets of the graph, is a pseudo-manifold with boundary.  This
result is of independent interest, and highlights some of
the nice combinatorial properties of whiskered graphs.
As we show later in the paper, \cref{maintheorem1} is the best statement we can hope to make for the entire class of whiskered graphs.
For further reading on edge ideals of whiskered graphs see
\cites{DE2009,FH2008,V1990,W2009}, and the generalizations given in \cites{BVT2013,CN2012,Fa2005}.

Our paper is structured as follows.  In the next
section we recall the relevant terminology and
results on Artinian algebras, simplicial complexes,
and graph theory.  In Section 3 we prove our main theorem
(see \cref{thm.mainthm}).  In Section 4 we provide some
illustrative examples to show that we cannot extend \cref{maintheorem1} to degrees $i=2$ or $i=n-2$.

\section{Background}\label{sec:background}
We recall the required results about Artinian rings and 
graph theory used to prove our main
results.  
For a general introduction to Artinian rings and the 
weak and strong Lefschetz properties we recommend \cite{MN2013}.

\subsection{Lefschetz properties of Artinian rings}

Throughout, we let $\K$ be a field.  A graded $\K$-algebra $A$ can be decomposed into a direct sum of vector spaces $A=\bigoplus_{i \ge 0}A_i$ such that $A_0 \cong \K$, and the multiplication satisfies $A_i A_j \subset A_{i+j}$. The algebra $A$ is {\it Artinian} precisely when the number of non-zero graded components is finite. 

\begin{definition}[Weak Lefschetz property]
An Artinian graded algebra $A= A_0 \oplus A_1  \oplus \dots \oplus A_t$ has the \emph{weak Lefschetz property (WLP)} if there exists a linear form $\ell \in A_1$ such that the map $\times \ell : A_i \to A_{i+1}$ given by $a \mapsto a \ell$ has maximal rank $\min\{\dim_{\K}A_i, \dim_{\K}A_{i+1}\}$ for each $i=0, \ldots, t-1$. If $A$ has the WLP with the linear form $\ell$, we say that $\ell$ is a \emph{weak Lefschetz element}. 
\end{definition}

Note that a weak Lefschetz element, if it exists, is in general not unique. In this paper we consider Artinian algebras of the form $\K[x_1, \ldots, x_n]/I$ where $I$ is a monomial ideal, and in this case it suffices to consider $\ell$ to be the sum of the variables. 

\begin{lemma}[\cites{MMN2011, LN2019}]\label{lem:linear_form}
An Artinian algebra $\K[x_1, \ldots, x_n]/I$, where $I$ is a monomial ideal, has the WLP if and only if $x_1 + \cdots + x_n$ is a weak Lefschetz element.  
\end{lemma}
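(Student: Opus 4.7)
The "if" direction is immediate from the definition of WLP, so the content is in the "only if" direction. I plan to prove the contrapositive in the following standard way, exploiting the torus symmetry that monomial ideals enjoy.

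Step one: suppose the WLP holds, so by definition there is some $\ell = a_1 x_1 + \cdots + a_n x_n$ such that $\times\ell : A_i \to A_{i+1}$ has maximal rank for every $i$. The key observation is that the set of such $\ell$ is Zariski open in the affine space $A_1 \cong \K^n$: the condition that $\times \ell$ fails to have maximal rank in degree $i$ is cut out by the vanishing of all maximal minors of the corresponding multiplication matrix (whose entries are linear in the $a_j$), and WLP fails precisely on the union of these closed subsets over all $i$. So the set $U \subseteq A_1$ of weak Lefschetz elements is a nonempty Zariski open subset (assuming, as is standard, that $\K$ is infinite).

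Step two: I will upgrade this to a Lefschetz element whose coefficients are \emph{all nonzero}. The set $V = \{(a_1,\ldots,a_n) \in \K^n : a_i \ne 0 \text{ for all } i\}$ is Zariski open and dense. Over an infinite field $\K$, the intersection $U \cap V$ of two nonempty Zariski open subsets of the irreducible variety $\K^n$ is nonempty. Hence there is a weak Lefschetz element $\ell = a_1 x_1 + \cdots + a_n x_n$ with all $a_i \neq 0$.

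Step three: Here is where the monomial hypothesis on $I$ enters decisively. For any $(t_1,\dots,t_n) \in (\K^*)^n$, the rescaling $\phi : x_i \mapsto t_i x_i$ is an automorphism of $\K[x_1,\dots,x_n]$ that preserves every monomial ideal (each monomial is merely rescaled), and hence descends to a graded $\K$-algebra automorphism of $A$. Since an algebra automorphism conjugates the multiplication map $\times \ell$ to $\times \phi(\ell)$, the two maps have the same rank in every degree. Applying this with $t_i = a_i^{-1}$ (possible because all $a_i$ are nonzero) gives $\phi(\ell) = x_1 + \cdots + x_n$, which is therefore also a weak Lefschetz element, as required.

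The main obstacle, if any, is purely conceptual: recognizing that WLP is a Zariski-open condition and that the monomial structure provides a transitive enough torus action on the punctured linear forms to transport an arbitrary ``all nonzero coefficient'' Lefschetz element to the specific form $x_1 + \cdots + x_n$. Both ingredients are routine once identified, so no delicate calculation is expected.
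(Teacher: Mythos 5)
The paper gives no proof of this lemma; it only cites \cite{MMN2011}*{Proposition 2.2} (infinite fields) and \cite{LN2019}*{Proposition 4.3} (arbitrary fields). Your argument is exactly the standard torus-action proof underlying the first of these citations, and each of its three steps is sound: the weak Lefschetz locus is the complement of a finite union of determinantal closed sets, hence open and nonempty; it meets the dense torus $(\K^*)^n$ when $\K$ is infinite; and the rescaling $x_i\mapsto t_ix_i$ is a graded automorphism preserving any monomial ideal, which intertwines $\times\ell$ with $\times\phi(\ell)$ and so preserves ranks in every degree.

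The one genuine gap is the hypothesis you slip in at Step two: you prove the lemma only for infinite $\K$, whereas it is stated (and cited via \cite{LN2019}) over an arbitrary field, and the paper does apply it in prime characteristic in \cref{c:char}. Over a finite field the nonempty Zariski-open set $U\cap V$ need not contain any $\K$-rational point, so the density argument collapses. The standard repair is a base-change argument: if $\ell\in A_1$ is a weak Lefschetz element of $A=\K[x_1,\ldots,x_n]/I$, the same matrices represent $\times\ell$ on $A\otimes_\K\overline{\K}$ and ranks of matrices are unchanged under field extension, so $A\otimes_\K\overline{\K}$ has the WLP; your argument over the infinite field $\overline{\K}$ then yields that $x_1+\cdots+x_n$ is a weak Lefschetz element there; finally, the matrix of $\times(x_1+\cdots+x_n)$ has entries in the prime field, so its rank over $\K$ equals its rank over $\overline{\K}$, and $x_1+\cdots+x_n$ is a weak Lefschetz element of $A$ itself. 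With that paragraph added, your proof covers the full statement.
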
  

\cref{lem:linear_form} was first stated in \cite{MMN2011}*{Proposition 2.2} over an infinite field, and later in \cite{LN2019}*{Proposition 4.3} over an arbitrary field.

 Consider a graded Artinian algebra $A=A_0 \oplus \dots \oplus A_t$, assuming $A_t \ne \{0\}$. The \emph{socle} of $A$ is the subspace
 \[
 \{f\in A \ | \ af=0 \ \text{for any } a \in A_1 \}.
 \]
 It is clear that the socle contains the graded component $A_t$, and $A$ is called \emph{level} if the socle is precisely $A_t$. When $A$ is level we say that $t$ is the \emph{socle degree} of $A$. 

 \begin{lemma}[{\cite{MMN2011}*{Proposition 2.1}}]\label{lem:injsurj}
 Let $A=A_0 \oplus \dots \oplus A_t$ be a graded Artinian algebra, and $\ell \in A_1$. If the map $\times \ell :A_i \to A_{i+1}$ is surjective for some $i_0$, then the same holds for any $i>i_0$. Moreover, if $A$ is level, and   $\times \ell : A_i \to A_{i+1}$ is injective for some $i_0$, then the same holds for any $i<i_0$.
 \end{lemma}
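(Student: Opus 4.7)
The plan is to treat the two propagation claims separately. Both arguments rest on the basic commutation $\ell(ca) = c(\ell a)$ for $c \in A_1$, together with the standard graded structure $A_{j+1} = A_1 \cdot A_j$ implicit in the setup. The surjectivity claim requires nothing more, while the injectivity claim is exactly where the level hypothesis will be used.

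For the surjectivity propagation, I would argue by induction on $i - i_0 \geq 1$. The inductive step reduces to showing that if $\times \ell : A_{i_0} \to A_{i_0+1}$ is surjective, then so is $\times \ell : A_{i_0+1} \to A_{i_0+2}$. Using $A_{i_0+1} = \ell \cdot A_{i_0}$,
\[
A_{i_0+2} \;=\; A_1 \cdot A_{i_0+1} \;=\; A_1 \cdot \ell \cdot A_{i_0} \;=\; \ell \cdot (A_1 \cdot A_{i_0}) \;=\; \ell \cdot A_{i_0+1},
\]
which gives the desired surjectivity, and iteration completes the argument.

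For the injectivity propagation, it again suffices to establish the single step: if $A$ is level and $\times \ell : A_{i_0} \to A_{i_0+1}$ is injective, then $\times \ell : A_{i_0-1} \to A_{i_0}$ is injective. Take $a \in A_{i_0-1}$ with $\ell a = 0$. For every $c \in A_1$, the element $ca \in A_{i_0}$ satisfies $\ell(ca) = c(\ell a) = 0$, so the injectivity hypothesis on $A_{i_0}$ forces $ca = 0$. Hence $a$ is annihilated by every linear form, i.e., $a$ lies in the socle of $A$.

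The only step genuinely requiring the level assumption is the conclusion: levelness places the socle entirely in degree $t$, so $a \in A_{i_0-1}$ lying in the socle forces $a = 0$ provided $i_0 - 1 < t$. The latter holds because, in the nontrivial case $A_{i_0} \ne 0$, the injection $A_{i_0} \hookrightarrow A_{i_0+1}$ forces $A_{i_0+1} \ne 0$ and hence $i_0 + 1 \leq t$. Iterating yields injectivity of $\times \ell$ on $A_i$ for every $i < i_0$. I do not expect any deep obstacle here; the only subtlety is recognizing that without levelness the socle argument collapses, which is precisely why the lemma needs the extra hypothesis to propagate injectivity downward.
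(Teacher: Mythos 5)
The paper does not prove this lemma itself; it is quoted directly from \cite{MMN2011}*{Proposition 2.1}, so there is no in-paper argument to compare against. Your proof is correct and is essentially the standard one: surjectivity propagates because $A_{i_0+2}=A_1\cdot A_{i_0+1}=\ell\cdot A_1\cdot A_{i_0}=\ell\cdot A_{i_0+1}$ (using standard gradedness, which you rightly note is implicit since all algebras in the paper are quotients of polynomial rings), and injectivity propagates because an element of $\ker(\times\ell)$ in degree $i_0-1$ is pushed by every linear form into $\ker(\times\ell)\cap A_{i_0}=0$, hence lies in the socle, which levelness confines to degree $t$. Your handling of the degree bound $i_0-1<t$ is the only fussy point, and it is adequately addressed for the range of indices $0\le i_0\le t-1$ in which the lemma is actually applied.
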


When $A$ is a level algebra of characteristic 0 defined by a monomial ideal, an argument found in the proof of \cite{Hausel}*{Theorem 6.3} proves injectivity of several multiplication maps. For the reader's convenience we include a short proof here. 

\begin{lemma}\label{lemma:monomial_level_injective}
Let $A=\K[x_1, \ldots, x_n]/I$ be a level Artinian algebra of socle degree $t$, where $I$ is a monomial ideal, and ${\rm char}(\K) =0$. Let $\ell=x_1 + \dots + x_n$. Then the maps $\times \ell^{t-2k}:A_k \to A_{t-k}$ and $\times \ell: A_k \to A_{k+1}$ are both injective, for $k<t/2$. 
\end{lemma}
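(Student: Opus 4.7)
My plan is to realize $A$ as a submodule of a direct sum of monomial complete intersections, for which the strong Lefschetz property is classical by Stanley and Watanabe, and then pull injectivity back to $A$. The underlying observation is that, because $A$ is a level monomial Artinian algebra with socle degree $t$, its Macaulay inverse system is generated by finitely many monomials $m_1,\ldots,m_r$ of degree $t$ in the dual polynomial ring. Setting $I_j = \Ann_R(m_j)$, one then has $I = \bigcap_j I_j$, and each quotient $R/I_j$ is (after dropping the variables that vanish on $m_j$) a monomial complete intersection of socle degree $t$.

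First, I would record the degree-preserving inclusion of graded $R$-modules
\[
\phi\colon A \hookrightarrow \bigoplus_{j=1}^{r} R/I_j
\]
induced by the inclusions $I \subseteq I_j$, and note that $\phi$ commutes with multiplication by $\ell = x_1+\cdots+x_n$. Second, I would invoke the Stanley-Watanabe theorem: over a field of characteristic zero, every monomial complete intersection has the strong Lefschetz property, and the sum of its (surviving) variables is a strong Lefschetz element. Applied to each $R/I_j$, this gives that $\times \ell^{t-2k}\colon (R/I_j)_k \to (R/I_j)_{t-k}$ is injective whenever $k < t/2$.

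Combining these two ingredients, if $f \in A_k$ satisfies $\ell^{t-2k}f = 0$ in $A$, then $\ell^{t-2k}\phi(f) = 0$ in every summand, so $\phi(f) = 0$ and hence $f = 0$. This establishes the injectivity of $\times \ell^{t-2k}$. The injectivity of $\times \ell\colon A_k \to A_{k+1}$ is then automatic: since $k < t/2$ gives $t-2k \geq 1$, any $f$ with $\ell f = 0$ satisfies $\ell^{t-2k}f = \ell^{t-2k-1}(\ell f) = 0$, forcing $f = 0$.

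The main obstacle is the Macaulay inverse systems bookkeeping in the first step: confirming that the inverse system of a level monomial Artinian algebra is generated by degree-$t$ monomials, and that the image of $\ell$ in each quotient $R/I_j$ is still the sum of the surviving variables, so that Stanley-Watanabe applies to our specific $\ell$ rather than to an abstract generic linear form. Both verifications are routine, but should be made explicit, since the Stanley-Watanabe conclusion is what supplies all of the characteristic-zero content of the lemma.
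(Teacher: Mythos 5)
Your proposal is correct and follows essentially the same route as the paper's proof: both use the Macaulay inverse system to write $I=\bigcap_j\Ann(m_j)$ with the $m_j$ the degree-$t$ monomials surviving in $A$, reduce injectivity of $\times\ell^{t-2k}$ to the strong Lefschetz property of the monomial complete intersections $R/\Ann(m_j)$ in characteristic zero (Stanley), and then deduce injectivity of $\times\ell$ by factoring $\ell^{t-2k}$ through it. Your direct-sum embedding is just a repackaging of the paper's observation that $\ell^{t-2k}f\in\Ann(m_j)$ for every $j$ forces $f\in I$, and the bookkeeping you flag is harmless since $\Ann(m_j)$ is already a monomial complete intersection in all $n$ variables (with exponent $1$ on the variables not dividing $m_j$).
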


\begin{proof}
	We start by recalling the construction of a level algebra via Macaulay's inverse system. Let $R=\K[x_1, \ldots, x_n]$ and $S=\K[X_1, \ldots, X_n]$, and let the polynomials of $R$ act as differential operators on $S$ by taking $x_i=\frac{\partial}{\partial X_i}$. For $g \in S$ we let $\Ann(g)$ denote the ideal of $R$ consisting of all elements annihilating $g$ under this action. An Artinian algebra $R/I$ is level with socle degree $t$ if and only if 	$I= \Ann(g_1) \cap \dots \cap \Ann(g_s)$ for some $g_1, \ldots, g_s \in S_t$. For a proof of this fact see \cite{Lefschetz_book}*{Proposition 2.74}.
	
	Now assume $I$ is a monomial ideal such that $R/I$ is Artinian and level of socle degree $t$. Then $I= \Ann(m_1) \cap \dots \cap \Ann(m_s)$ for some monomials $m_1, \ldots, m_s \in S_t$. Indeed, the monomials $m_1, \ldots, m_s$ can be obtained by taking all monomials in $R_t$ not included in $I$, and substituting $x_i \mapsto X_i$.   
 Let $\ell=x_1 + \dots + x_n$, and let $f$ be an element in the kernel of the map $\times \ell^{t-2k}:A_k \to A_{t-k}$, for some $k<t/2$. Lifting $f$ to $R$ we have $f \in R_k$ such that $\ell^{t-2k}f \in I$. By the decomposition of $I$ we also have $\ell^{t-2k}f \in \Ann(m_i)$, for each $i=1, \ldots, s$. Taking $A^{(i)}=R/\Ann(m_i)$, this implies that $f$ is in the kernel of $\times \ell^{t-2k}:A^{(i)}_k \to A^{(i)}_{t-k}$. The algebra $A^{(i)}$ is a monomial complete intersection. It is well known that such algebras have the strong Lefschetz property, the first proof was given in \cite{S1980}. In particular, this says that the maps $\times \ell^{t-2k}:A^{(i)}_k \to A^{(i)}_{t-k}$ are bijective for all $k<t/2$. Hence $f \in \Ann(m_i)$ for each $i$, or in other words $f \in I$,  and we conclude that   the map $\times \ell^{t-2k}:A_k \to A_{t-k}$ is injective. Then the same holds for $\times \ell: A_k \to A_{k+1}$, as an element in the kernel of this map would also be in the kernel of  $\times \ell^{t-2k}:A_k \to A_{t-k}$. 
\end{proof}

Recall that a {\it simplicial complex} $\Delta$ on
a vertex set $V = \{x_1,\ldots,x_n\}$ is a set of subsets of $V$ that satisfies
the conditions: (1) if $F \in \Delta$ and
$G \subseteq F$, then $G \in \Delta$, and (2)
$\{x_i\} \in \Delta$ for all $i$.  An 
element $F \in \Delta$ is a {\it face}.  The 
maximal elements of $\Delta$ (with respect
to inclusion) are called the {\it facets} of
$\Delta$.  Given a face $F \in \Delta$, the
{\it dimension} of $F$ is $\dim(F) = |F|-1$.  
By convention, $\dim(\emptyset) = -1$.  
We sometimes call a face $F$ a {\it $t$-face}
if $\dim(F) =t$.
The {\it dimension of $\Delta$} is defined
to be $\dim(\Delta) = \max\{\dim(F) \mid F \in \Delta\}$.  A simplicial complex is {\it pure} if
all its facets have the same dimension.
The $\bff$-vector of a $d$-dimensional simplicial complex $\Delta$ is the vector of integers
$$    \bff(\Delta)=(f_0,\ldots,f_d) \qwhere f_i=\mbox{ number of $i$-dimensional faces of }\Delta.
$$

We are interested in the following class of simplicial
complexes.

\begin{definition}[Pseudo-manifold]\label{defn.pseudo}
A $d$-dimensional simplicial complex $\Delta$ is
a {\it pseudo-manifold} if the following conditions
hold:
\begin{enumerate}  
    \item $\Delta$ is pure;
    \item every face of dimension $(d-1)$ of
    $\Delta$ is contained in at most two
    facets of $\Delta$; and 
    \item for every two facets $F,F' \in \Delta$,
    there exists a sequence of facets $F=G_0,G_1,\ldots,G_t=F'$ such that 
    $\dim (G_i \cap G_{i+1}) = d-1$ for 
    all $i = 0,\ldots,t-1$.
\end{enumerate}
Additionally, a pseudo-manifold has a {\it boundary} if
there exists at least one face of dimension $(d-1)$
of $\Delta$ that belongs to exactly one facet of
$\Delta$.
\end{definition}

Given a simplicial complex $\Delta$, the
Stanley-Reisner ideal of $\Delta$ is the 
square-free monomial ideal 
$$I_\Delta = \langle x_{i_1}\cdots x_{i_r} \mid
\{x_{i_1},\ldots,x_{i_r}\} \not\in \Delta \rangle.$$
We construct a graded Artinian algebra from $\Delta$ 
as follows:
\begin{equation}\label{e:ad}
A(\Delta) = \mathbb{K}[x_1,\ldots,x_n]/
(\langle x_1^2,\ldots, x_n^2 \rangle + I_\Delta).
\end{equation}
From the definitions of the Stanley-Reisner ideal and the $\bff$-vector of $\Delta$, it follows that when $A=A(\Delta)$, then $$\dim_{\K}A_i=f_{i-1} \qfor i=1,\ldots,\dim(\Delta)+1.$$

The following theorem now links together a number of
concepts defined above.  In the statement below, the 
{\it $1$-skeleton} of $\Delta$ is the simplicial complex
consisting of all the faces of $\Delta$ of dimension
at most one, which can be considered a graph.  A graph
is {\it bipartite} if it contains no cycles of odd length.
Also in the statement below is the dual graph of $\Delta$.  We will not need to make use of the dual graph in this paper and thus do not define the term here. 
 
\begin{theorem}[{Dao--Nair~\cite{DN2021}*{Theorems 1.1, 1.2}}]\label{thm.daonair}
Let $\Delta$ be a simplicial complex with $1$-skeleton $G$, let 
$A = A(\Delta)$ be the Artinian $\K$-algebra defined in \eqref{e:ad}, where $\cha(\K)=0$, and let $\ell=x_1 + \dots + x_n$. 
\begin{itemize}
\item The map $\times \ell: A_1 \to A_2$ is injective if and only if $f_0 \le f_1$ (or equivalently $\dim_{\K}A_1 \le \dim_{\K}A_2$) and $G$ has no bipartite connected components. 

\item If $\Delta$ is a $d$-dimensional pseudo-manifold, then $\times \ell: A_d \to A_{d+1}$ has maximal rank if and only if 
  \begin{itemize}
     \item $\Delta$ has boundary, or
     \item $\Delta$ has no boundary, and the dual graph of $\Delta$ is not bipartite. 
  \end{itemize}
\end{itemize}

\end{theorem}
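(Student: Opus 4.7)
The plan is to identify the multiplication map $\times \ell$ with a combinatorial incidence-type matrix whose rank is controlled by the graph $G$ in the first part and by the dual graph of $\Delta$ in the second. In both settings, since $x_i^2 = 0$ and monomials supported on non-faces vanish in $A$, one has $\ell \cdot x_F = \sum_{v \notin F,\, F\cup\{v\} \in \Delta} x_{F \cup \{v\}}$, so the matrix of $\times \ell$ has $\{0,1\}$ entries recording face containment.

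For the first assertion, take the bases $\{x_i\}$ of $A_1$ and $\{x_ix_j : \{i,j\} \in E(G)\}$ of $A_2$. Then $\ell \cdot x_i = \sum_{j \sim i} x_ix_j$, so an element $\sum c_i x_i$ lies in the kernel of $\times \ell$ iff $c_i + c_j = 0$ for every edge $\{i,j\}$ of $G$. In characteristic zero, such nontrivial assignments exist precisely when some connected component of $G$ is bipartite, in which case one can alternate signs on the two color classes. Hence $\times \ell$ is injective iff no component of $G$ is bipartite, and this condition also forces $f_0 \le f_1$, because a non-bipartite connected graph contains an odd cycle and so has at least as many edges as vertices.

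For the second assertion, index the basis of $A_d$ by the $(d-1)$-faces of $\Delta$ and that of $A_{d+1}$ by the facets. A counting argument using that each facet contains $d+1$ faces of dimension $d-1$, while each $(d-1)$-face lies in at most two facets, yields $\dim A_{d+1} \le \dim A_d$, so maximal rank is equivalent to surjectivity. By duality, $\times \ell$ is surjective iff the only tuple $(c_G)_{G \text{ facet}}$ satisfying $\sum_{G \supset F} c_G = 0$ for every $(d-1)$-face $F$ is zero. If $\Delta$ has boundary, pick a $(d-1)$-face $F$ contained in a single facet $G_0$; then $c_{G_0} = 0$, and property (iii) in \cref{defn.pseudo} propagates $c_G = 0$ to all facets along the dual graph. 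If $\Delta$ has no boundary, the relations read $c_{G_1} = -c_{G_2}$ along each dual edge; an odd cycle in the dual graph then forces $c_G = -c_G$, hence $c_G = 0$ in characteristic zero, and propagation gives the trivial solution. Conversely, a bipartition of the dual graph yields the nonzero solution $c_G = \pm 1$ alternating, obstructing surjectivity.

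The main obstacle I foresee is not deep but rather bookkeeping: carefully matching the matrix of $\times \ell$ with the appropriate incidence matrix on both sides, and ensuring that the propagation argument in the second part uses only the axioms of a pseudo-manifold in \cref{defn.pseudo}. The characteristic-zero hypothesis enters only through the implication $2c = 0 \Rightarrow c = 0$, used to handle odd cycles in the dual graph and to rule out non-bipartite assignments in the first part; this neatly isolates the one characteristic-sensitive step and explains why the conclusion can be expected to hold whenever $\cha(\K) \ne 2$.
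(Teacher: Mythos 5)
The paper does not actually prove this statement: it is imported verbatim from Dao--Nair \cite{DN2021}, so there is no internal proof to compare against. Your proposal supplies a self-contained elementary argument, and I believe it is correct. The reduction of both bullets to the sign-alternation system $c_i+c_j=0$ on a graph (the $1$-skeleton in the first bullet, the dual graph in the second) is exactly the right mechanism, and your observation that the hypothesis $f_0\le f_1$ is redundant --- since a connected non-bipartite graph contains a cycle and hence satisfies $|E|\ge|V|$ --- is accurate, so proving ``injective iff no bipartite component'' does establish the stated equivalence. The second bullet is also sound: purity guarantees every $d$-face is a facet and every $(d-1)$-face lies in some facet; the incidence count $(d+1)f_d\le 2f_{d-1}$ gives $\dim A_{d+1}\le \dim A_d$ so that maximal rank means surjectivity; and the cokernel computation via $M^\top$ together with condition (3) of \cref{defn.pseudo} (connectivity of the dual graph) propagates $c_G=0$ from a boundary ridge, or forces $2c_G=0$ around an odd dual cycle. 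Two small points you should make explicit. First, the inequality $(d+1)f_d\le 2f_{d-1}\Rightarrow f_d\le f_{d-1}$ needs $d\ge 1$; for $d=0$ (two points, no boundary) the statement degenerates, so the argument should be restricted to $d\ge1$. Second, you implicitly use that two distinct facets of a pure $d$-complex share at most one $(d-1)$-face (if they shared two, their intersection would have $d+1$ elements and they would coincide), so each dual edge contributes exactly one relation $c_{G_1}+c_{G_2}=0$ and the ``alternating $\pm1$'' assignment on a bipartition is genuinely consistent. With those two clarifications the proof is complete, and your closing remark correctly isolates why the argument survives in any characteristic other than $2$, consistent with the generalization in \cite{TH2023} that the paper invokes later.
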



\subsection{Graph theory background}
Let $G = (V,E)$ denote a finite simple graph, where
$V = \{x_1,\ldots,x_n\}$ denotes the set of {\it vertices}
of $G$, and $E$ denotes the set of {\it edges} of
$G$. We will write $V(G)$, respectively $E(G)$, if we 
wish to highlight that the vertices, respectively edges, belong
to the graph $G$. Note that a graph is a 1-dimensional simplicial complex.

\begin{definition}[Independent set, complex]
For a graph $G = (V,E)$, a subset $W \subseteq V$
is called an {\it independent set}
if for all $e \in E$, at least one vertex of $e$ is not in $W$.  The
{\it independence complex} of $G$ is the simplicial complex 
$$
\Ind(G) = 
\{W \mid \mbox{$W \subseteq V$ is  an independent set of $G$}\}.
$$
\end{definition}

It is straightforward to verify that $\Ind(G)$
satisfies the definition of a simplicial complex.  
The facets of $\Ind(G)$ correspond to the
{\it maximal independent sets} of the graph $G$.
By the Stanley-Reisner correspondence,
it can be shown that 
$I_{\Ind(G)} = I(G)$,
that is, the square-free monomial  associated with the
simplicial complex $\Ind(G)$ is
 precisely  the edge ideal of $G$.  Instead of writing
$A(\Ind(G))$ for the graded Artinian algebra
of \cref{e:ad}, we will abuse
notation and simply write $A(G)$ (thus
agreeing with our notation in the 
introduction), and call $A(G)$ the
{\it Artinian algebra of $G$}.

We now turn our attention to the main class
of graphs we wish to study.

\begin{definition}[Whiskering]\label{defn.whisker}
Given a graph $G$ with vertex set $V = \{x_1,\ldots,x_n\}$, the 
{\it whiskered graph of $G$}, denoted $w(G)$, is
the graph on the vertex set $V(w(G)) = \{x_1,\ldots,x_n,
y_1,\ldots,y_n\}$ and edge set
$$E(w(G)) = E(G) \cup \{\{x_i,y_i\} \mid i = 1,\ldots,n\}.$$
We say a graph is {\it whiskered} if it is the whiskering of some graph $G$. 
\end{definition}

Informally, we ``whisker'' a graph $G$ by adding a new 
vertex $y_i$ for each vertex $x_i$ in $G$, and join these two vertices together with an edge. An example of a graph $G$ and its whiskered graph
$w(G)$ is given in \cref{fig:whisker}.
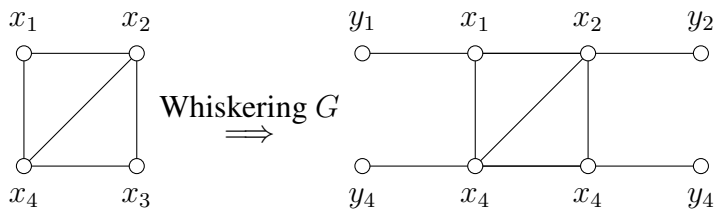
\begin{figure}[h]
\begin{tikzpicture}[scale=0.5]
\draw (6,0) -- (6,3) -- (9,3) -- (9,0) -- (6,0)-- (9,3);

\draw (18,0) -- (18,3) -- (21,3) -- (21,0) -- (18,0) -- (21,3);
\draw (15,0)-- (24,0);
\draw (15,3)-- (24,3);

\node at (12,1.5) {Whiskering $G$};
\node at (12,.8) {$\Longrightarrow$};

\fill[fill=white,draw=black] (6,0) circle (.2)
node[label=below:$x_4$] {};
\fill[fill=white,draw=black] (9,0) circle (.2)
node[label=below:$x_3$] {};
\fill[fill=white,draw=black] (6,3) circle (.2)
node[label=above:$x_1$] {};
\fill[fill=white,draw=black] (9,3) circle (.2)
node[label=above:$x_2$] {};

\fill[fill=white,draw=black] (15,0) circle (.2)
node[label=below:$y_4$] {};
\fill[fill=white,draw=black] (15,3) circle (.2)
node[label=above:$y_1$] {};
\fill[fill=white,draw=black] (18,0) circle (.2)
node[label=below:{$x_4$}] {};
\fill[fill=white,draw=black] (18,3) circle (.2)
node[label=above:{$x_1$}] {};
\fill[fill=white,draw=black] (21,0) circle (.2)
node[label=below:$x_4$] {};
\fill[fill=white,draw=black] (21,3) circle (.2)
node[label=above:$x_2$] {};
\fill[fill=white,draw=black] (24,0) circle (.2)
node[label=below:$y_4$] {};
\fill[fill=white,draw=black] (24,3) circle (.2)
node[label=above:$y_2$] {};
\end{tikzpicture}
    \caption{A graph $G$ and its whiskered graph
    $w(G)$}
    \label{fig:whisker}
\end{figure}

Whiskered graphs have a number of nice properties,
some of which can be found in
\cites{DE2009,FH2008,V1990,W2009}. In particular, the operation of whiskering always produces a Cohen-Macaulay graph; in other words no matter what $G$ we choose, $I(w(G))$ is a Cohen-Macaulay ideal. This implies also that $I(w(G))$ is an unmixed ideal, or in the language of our current paper, it has a pure independence complex (see, for example, \cite{DE2009}*{Theorem 4.4} for a proof). 
Because the independence complex of a whiskered graph is pure,
the graded Artinian algebra $A(w(G))$ is also level
(see \cite{B1994}). We record these facts in a lemma.

\begin{lemma}\label{lem.whiskerprop} 
If $G$ is a whiskered graph on $2n$ vertices, then the independence complex $\Ind(G)$ is a pure simplicial complex of dimension $n-1$.
Furthermore, the algebra $A(G)$ is level with socle degree
$n$.
 \end{lemma}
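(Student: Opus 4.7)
The plan is to prove the two assertions separately. First I would establish that every maximal independent set of $w(G)$ has exactly $n$ vertices, giving purity of $\Ind(w(G))$ in dimension $n-1$. Then I would deduce the level property and socle degree from a general statement about squarefree Artinian algebras associated to pure simplicial complexes.

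For the purity statement, the key combinatorial observation is that the edges $\{x_i,y_i\}$ force a \emph{pairing structure} on $V(w(G))$. First I would note that any independent set $W \subseteq V(w(G))$ contains at most one of $x_i,y_i$ for each $i=1,\ldots,n$, since $\{x_i,y_i\}$ is an edge; this immediately gives $|W|\le n$. Next, suppose $W$ is an independent set with $W\cap \{x_i,y_i\}=\emptyset$ for some $i$. Since the only edge in $w(G)$ incident to $y_i$ is $\{x_i,y_i\}$ and $x_i\notin W$, the set $W\cup\{y_i\}$ is still independent, so $W$ is not maximal. Therefore every facet of $\Ind(w(G))$ meets each pair $\{x_i,y_i\}$ in exactly one vertex, and so has cardinality exactly $n$. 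This yields purity in dimension $n-1$.

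For the level property, I would invoke the following general fact: if $\Delta$ is a pure simplicial complex of dimension $d$ on vertex set $\{x_1,\ldots,x_m\}$, then the Artinian algebra $A(\Delta)$ defined in \eqref{e:ad} is level with socle degree $d+1$. This can be proved directly: the squarefree monomials $x_F = \prod_{x_i\in F} x_i$ with $F\in \Delta$ form a $\K$-basis of $A(\Delta)$, and the top graded piece $A(\Delta)_{d+1}$ is nontrivially spanned by $\{x_F : F\text{ a facet of }\Delta\}$. For any non-facet face $F$, purity guarantees a facet $F'\supsetneq F$; picking any $x_j\in F'\setminus F$ gives $x_j\cdot x_F = x_{F\cup\{x_j\}} \neq 0$ in $A(\Delta)$, so $x_F$ is not in the socle. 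Hence the socle equals $A(\Delta)_{d+1}$, establishing levelness and identifying the socle degree. Applied to $\Delta = \Ind(w(G))$ with $d = n-1$, this gives levelness of $A(w(G))$ with socle degree $n$.

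There is no real obstacle in either half of the argument: the purity is a short case analysis exploiting the whisker edges, and the level claim reduces to a standard face-extension argument which is the essential content of \cite{B1994}. If an alternative route were desired, one could instead cite \cite{DE2009}*{Theorem 4.4} for Cohen–Macaulayness of $w(G)$ (which implies purity of $\Ind(w(G))$) and then combine it with the squarefree Artinian reduction argument above to produce the level conclusion.
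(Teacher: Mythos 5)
Your proof is correct, but it is genuinely more self-contained than what the paper does: the paper offers no proof at all for this lemma, instead citing \cite{DE2009}*{Theorem 4.4} for the fact that whiskering produces a Cohen--Macaulay (hence unmixed) graph, which gives purity of $\Ind(G)$, and then citing \cite{B1994} for the passage from purity to levelness. Your purity argument via the pairing structure --- each maximal independent set meets each whisker pair $\{x_i,y_i\}$ in exactly one vertex, since the two are adjacent and $y_i$ has no other neighbours --- is an elementary replacement for the Cohen--Macaulayness citation and is exactly the right observation (it reappears, in fact, inside the paper's proof of \cref{thm.mainthm}). Your second half is also sound: for the algebra $A(\Delta)$ of \eqref{e:ad} the socle is spanned by the monomials $x_F$ with $F$ a facet, so purity in dimension $d$ gives levelness with socle degree $d+1$. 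The only step you gloss over is why the socle is spanned by \emph{monomials} (so that showing $x_F\notin\operatorname{socle}$ for non-facets $F$ suffices); this follows either from the general fact that the colon of a monomial ideal by the maximal ideal is again monomial, or by noting that for $f=\sum_F c_F x_F$ and $j\notin F$ the coefficient of $x_{F\cup\{x_j\}}$ in $x_jf$ is exactly $c_F$, forcing $c_F=0$ whenever $F$ extends inside $\Delta$. With that one sentence added, your argument is a complete, characteristic-free, citation-free proof of the lemma; what the paper's route buys instead is brevity and a pointer to the stronger structural facts (vertex decomposability, Cohen--Macaulayness) that it wants to invoke later anyway.
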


\section{Main Result}

In this section we prove our main result. 
As we shall show, \cref{maintheorem1} is
in fact a corollary about the structure of the 
independence complex of a whiskered graph.  In 
particular, we prove that these independence
complexes are all pseudo-manifolds.

\begin{theorem}\label{thm.mainthm}
If $G$ is a whiskered graph on $2n$ vertices, then $\Ind(G)$ is a pseudo-manifold. Moreover, if $|E(G)| \ge n+1$, then 
$\Ind(G)$ is a pseudo-manifold with boundary.
\end{theorem}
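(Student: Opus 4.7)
\textbf{Proof proposal for \cref{thm.mainthm}.}

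Write $G = w(H)$ for some graph $H$ on vertices $x_1,\dots,x_n$, so $V(G) = \{x_1,\ldots,x_n,y_1,\ldots,y_n\}$. The first and main step is to set up a clean description of the facets of $\Ind(G)$. Because the whisker $\{x_i,y_i\}$ is an edge, no independent set contains both $x_i$ and $y_i$. Moreover, since $y_i$ has $x_i$ as its only neighbor, any independent set that avoids both $x_i$ and $y_i$ can be enlarged by adjoining $y_i$. Hence every maximal independent set of $G$ has the form
\[
F_I \;=\; I \,\cup\, \{y_j \mid x_j \notin I\},
\]
where $I$ is an independent set of $H$, and this assignment $I \mapsto F_I$ is a bijection between independent sets of $H$ and facets of $\Ind(G)$. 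In particular every facet has exactly $n$ vertices, giving purity and confirming $\dim \Ind(G) = n-1$, in agreement with \cref{lem.whiskerprop}.

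Next I would verify condition (2) of \cref{defn.pseudo}. Let $F$ be a face with $|F| = n-1$. The at-most-one-per-index rule forces exactly one index $i_0$ with $x_{i_0}, y_{i_0}\notin F$, and exactly one of $x_i,y_i$ inside $F$ for every other $i$. The only candidate vertices that can be added to $F$ are $x_{i_0}$ and $y_{i_0}$. The vertex $y_{i_0}$ can always be adjoined (its unique neighbor $x_{i_0}$ lies outside $F$), while $x_{i_0}$ can be adjoined iff no $H$-neighbor of $x_{i_0}$ sits in $F$. Therefore $F$ is contained in at most two facets, with equality iff $x_{i_0}$ has no $H$-neighbor in $F$.

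For condition (3), the bijection $I\mapsto F_I$ reduces strong connectivity of the facets to a statement about independent sets of $H$. A direct computation shows that $|F_{I_1}\cap F_{I_2}| = n-1$ exactly when $|I_1\triangle I_2|=1$, i.e.\ when $I_1$ and $I_2$ differ by the addition or removal of a single vertex of $H$. Since every independent set of $H$ can be reduced to $\emptyset$ one vertex at a time and then built up to any other independent set one vertex at a time, any two facets are linked by a chain of facets meeting in codimension one. Thus $\Ind(G)$ is a pseudo-manifold.

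Finally, for the boundary statement, assume $|E(G)| \ge n+1$, so $H$ contains at least one edge $\{x_a,x_b\}$. Extend $\{x_a\}$ to a maximal independent set $I$ of $H$; then $x_b\notin I$, so $y_b\in F_I$. Consider the $(n-2)$-face $F = F_I \setminus \{y_b\}$. In the analysis of condition (2) applied to $F$ we have $i_0 = b$, the vertex $y_b$ recovers $F_I$, and the alternative extension by $x_b$ is blocked because $x_a \in I \subseteq F$ is an $H$-neighbor of $x_b$. Hence $F$ is contained in exactly one facet, so $\Ind(G)$ has boundary. The only mildly delicate point in the argument is the bookkeeping in the bijection between facets and independent sets of $H$; once that is in place, each of the three pseudo-manifold conditions and the boundary criterion follow directly.
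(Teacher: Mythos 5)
Your proposal is correct and follows essentially the same route as the paper: purity from \cref{lem.whiskerprop}, the one-vertex-per-index analysis of $(n-2)$-faces for condition (2), connecting every facet to the all-whisker facet $Y=F_\emptyset$ for condition (3), and producing a boundary face from an edge of $H$. The explicit bijection $I \mapsto F_I$ between independent sets of $H$ and facets of $\Ind(G)$ is a clean way to package the bookkeeping, but the underlying argument is the same.
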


\begin{proof}  Let $G=w(H)$, where
$$V(H) = \{x_1, \dots, x_n\} \qand V(G) = V(H) \cup \{y_1, \dots, y_n \}$$ are the vertex sets of $H$ and $G$, respectively.  
We need to verify \cref{defn.pseudo} (1)-(3). By \cref{lem.whiskerprop}, $\Ind(G)$ is a pure $(n-1)$-dimensional simplicial  complex, which  verifies \cref{defn.pseudo} (1). We next prove the remaining two conditions. 

We first show that every $(n - 2)$-face of $\Ind(G)$ is contained in at most two facets.   Observe that if $U$ is an independent set of $G$ and $x_k \notin U$ for some $k\in \{1,\ldots,n\}$, then $U \cup \{y_k\}$  is an independent set of $U$. This is because the vertex $y_k$ connects only to $x_k$. 
We therefore conclude that every $(n - 2)$-face $F$ of $\Ind(G)$ can be written as a disjoint union
$$
F = \{x_i \mid  i \in S\} \cup \{y_j \mid j \in T\}
$$
where $S, T \subset \{1, \dots, n\}$, $S \cap T = \emptyset$, 
and $|S \cup T| = n-1$. In particular, $$F \cup \{x_j\}, \ F \cup \{y_i\} \not \in \Ind(G) 
\qfor j \in T, \  i \in S.$$  
Suppose $\{1,\ldots,n\} \smallsetminus (S\cup T)=\{q\}$. Then there are at most two facets of $\Ind(G)$ that contain $F$, namely 
    \begin{itemize}
      \item $F \cup \{x_q\}$, if $\{x_i \mid i \in S\} \cup \{x_q\}$ is an independent set of $H$, and 
      \item $F \cup \{y_q\}$.
    \end{itemize}
We have now verified \cref{defn.pseudo} (2).

Finally, to show \cref{defn.pseudo} (3) holds, consider
any two facets $F, F' \in \Ind(G)$.  We claim there 
exists a sequence of facets $(F_0, \dots, F_s)$ such that 
$F_0 = F$, $F_s = F'$ and $|F_i \cap F_{i + 1}| = n-1$. 

To prove this claim, we let $Y = \{y_1, \dots, y_n\}$ be the 
facet of $\Ind(G)$ that contains all the vertices of the 
whiskers (which is clearly an independent set of $G$) and let 
$F$ be an arbitrary facet of $\Ind(G)$. After reordering 
the vertices of $\Ind(G)$, $F$ can be written as
$$
F = \{x_1, \dots, x_i, y_{i + 1}, \dots, y_n\}. 
$$
In particular, since $\{x_1,\ldots,x_i\}$ is an independent set of vertices in $G$, every subset of it is also independent. 

Consider the sequence of facets $(Y, F_1, \dots, F_{i - 1}, F_i=F)$, where 
    $$
      F_j = \{x_1, \dots, x_{j}, y_{j + 1}, \dots, y_n\}.
    $$
    Note that the sequence satisfies 
        $$
      |F_j \cap F_{j + 1}| = |\{x_1, \dots, x_j, y_{j + 2}, \dots, y_{n}\}| = n-1.
    $$ 
\cref{defn.pseudo} (3) now holds for any two
facets $F,F'$ of $\Ind(G)$ taking the sequences from
$F$ to $Y$ and $F'$ to $Y$ constructed as above, by gluing the two sequences together, with one of them in reverse order.
Thus, $\Ind(G)$ is a pseudo-manifold.

Note that the condition $|E(G)| \ge n+1$ implies 
that $H$ has at least one edge.  
Consequently, there
exists a maximal independent set $D \subsetneq V(H)$. Up to reordering the indices, we may assume $D = \{x_1, \dots, x_i\}$ for some $i<n$. The $(n - 2)$-face $F = \{x_1, \dots, x_i, y_{i + 1}, \dots, y_{n - 1}\}$ of $\Ind(G)$ is contained in the facet $F \cup \{y_n\}$.  Moreover, since $D$ is a 
maximal independent set, the vertex $x_n$ is adjacent to some $x_j \in D$. In particular, $F$ is only contained in one facet of $\Ind(G)$. Consequently, $\Ind(G)$ is a pseudo-manifold with boundary. 
\end{proof}

To prove Theorem
\ref{maintheorem1}, we need to take into
consideration the
characteristic of the field $\K$.

\begin{corollary}[Characteristic $0$ Case]\label{cor.maincor}
Suppose $G$ is a whiskered graph on $2n$ vertices and at least $n+1$ edges, and 
$A=A(G)$ is the Artinian algebra of $G$ over a field $\K$ of characteristic $0$.  If $\ell$ is the sum of the variables of $A$, then $\times \ell: A_i \to A_{i+1}$ has maximal rank 
when $i<n/2$ or $i=n-1$.
\end{corollary}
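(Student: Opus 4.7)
The plan is to obtain the two cases from two separate machines already set up in the paper, with essentially no new combinatorial work needed beyond \cref{thm.mainthm}. The characteristic hypothesis is used in both branches, but through different mechanisms.

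For the low-degree range $i<n/2$, I would invoke \cref{lemma:monomial_level_injective}. To do so I need to verify its hypotheses for $A=A(G)$: it is an Artinian quotient of a polynomial ring by a monomial ideal (the sum of the squares of the variables with $I(G)$), and by \cref{lem.whiskerprop} it is level with socle degree $n$. Since $\cha(\K)=0$, \cref{lemma:monomial_level_injective} with $t=n$ and $\ell=x_1+\cdots+x_{2n}+y_1+\cdots$ (the sum of the variables of $A$) gives that $\times\ell:A_i\to A_{i+1}$ is injective for every $i<n/2$, which is in particular maximal rank.

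For the top degree $i=n-1$, I would combine \cref{thm.mainthm} with the second bullet of \cref{thm.daonair}. \cref{thm.mainthm} asserts that under the hypothesis $|E(G)|\ge n+1$, the independence complex $\Delta=\Ind(G)$ is a pseudo-manifold with boundary, and by \cref{lem.whiskerprop} it has dimension $d=n-1$. The Stanley--Reisner ideal of $\Ind(G)$ equals the edge ideal $I(G)$, so the algebra $A(\Delta)$ of \eqref{e:ad} coincides with $A(G)$. Since $\Delta$ has boundary, \cref{thm.daonair} immediately gives that $\times\ell:A_{n-1}\to A_n$ has maximal rank, which is the desired statement for $i=n-1$.

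There is effectively no obstacle left at this stage because the heavy lifting was done elsewhere: the pseudo-manifold-with-boundary property of $\Ind(w(H))$ in \cref{thm.mainthm}, the Dao--Nair criterion for the top multiplication map, and Hausel's level-algebra injectivity argument recorded in \cref{lemma:monomial_level_injective}. The only thing I would want to state carefully is the identification of the algebras, namely that $A(G)=A(\Ind(G))$ and that the linear form $\ell$ in the two theorems is indeed the sum of all $2n$ variables, so that the Dao--Nair conclusion and the Hausel-type conclusion are about the exact same multiplication map appearing in the statement of the corollary.
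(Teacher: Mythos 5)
Your proposal is correct and follows essentially the same route as the paper: \cref{lemma:monomial_level_injective} (with \cref{lem.whiskerprop} supplying levelness and socle degree $n$) for $i<n/2$, and \cref{thm.mainthm} plus the second bullet of \cref{thm.daonair} for $i=n-1$, together with the identification $A(G)=A(\Ind(G))$. The paper's proof only adds a brief aside disposing of the smallest case $n=2$ (where $w(H)=P_4$) via a citation, which your general argument already covers.
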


\begin{proof}
 Let $G=w(H)$, where
$$V(H) = \{x_1, \dots, x_n\} \qand V(G) = V(H) \cup \{y_1, \dots, y_n \}$$ are the vertex sets of $H$ and $G$, respectively.  Note that $H$ must have at least one edge for $G$ to have $n+1$ edges, so the case $n=1$ is excluded. 
If $H$ has two vertices and one edge, then the whiskering $w(H)$ is the path $P_4$ in four vertices. The Artinian algebra defined by this graph has the WLP by \cite{NT2022}*{Theorem 4.4}.

The map $\times \ell: A_i \to A_{i+1}$ is injective when $i<n/2$ by Lemma \ref{lemma:monomial_level_injective},
making use of the fact that
$A$ is an Artinian level algebra of
socle degree $n$ by Lemma \ref{lem.whiskerprop}.
%
%
%
   Since $|E(G)| \geq n+1$, \cref{thm.mainthm} gives that the simplicial complex $\Ind(G)$ is an $(n - 1)$-dimensional pseudo-manifold with boundary.
    By applying \cref{thm.daonair}, the multiplication map $\times \ell: A_{n - 1} \to A_{n}$ has maximal rank.
\end{proof}

\begin{corollary}[Prime Characteristic $\neq 2$ Case]\label{c:char}
Suppose $G$ is a whiskered graph with $2n$ vertices and at least $n + 1$ edges. Let $A = A(G)$ be the Artinian algebra of $G$, and assume the characteristic of the base field $\K$ is a prime not equal to $2$. If $\ell$ is the sum of the variables of $A$, then $\times \ell: A_i \to A_{i + 1}$ has maximal rank when $i = 1$ or $n - 1$.
\end{corollary}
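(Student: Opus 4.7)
The plan is to parallel the proof of \cref{cor.maincor} while replacing the characteristic-zero Dao--Nair theorem (\cref{thm.daonair}) by its characteristic-$\neq 2$ generalization from \cite{TH2023}*{Theorem 58}. The purely combinatorial inputs we will need, namely \cref{lem.whiskerprop}, \cref{thm.mainthm}, and the identification of the $1$-skeleton of $\Ind(G)$, are all characteristic-free, so they apply verbatim. Note that \cref{lemma:monomial_level_injective} is genuinely unavailable in positive characteristic (its proof rests on Stanley's characteristic-zero strong Lefschetz theorem for monomial complete intersections), which is precisely why the statement here is restricted to the two extreme degrees $i = 1$ and $i = n - 1$.

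For $i = n - 1$, the proof is identical to the corresponding step of \cref{cor.maincor}: by \cref{thm.mainthm}, $\Ind(G)$ is an $(n-1)$-dimensional pseudo-manifold with boundary, and the second bullet of the generalized \cref{thm.daonair} yields that $\times \ell : A_{n-1} \to A_n$ has maximal rank.

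For $i = 1$, write $G = w(H)$. The hypothesis $|E(G)| \ge n + 1$ forces $|E(H)| \ge 1$; if $n \le 2$ the only possibility is $G = P_4$, which has the WLP over any field by \cite{NT2022}*{Theorem 4.4}. For $n \ge 3$ I would invoke the first bullet of the generalized \cref{thm.daonair}, and thus need to verify two things. First, the dimension inequality $f_0(\Ind(G)) \le f_1(\Ind(G))$: one has $f_0 = 2n$ and $f_1 = \binom{2n}{2} - |E(G)|$, and using the trivial bound $|E(G)| \le \binom{n}{2} + n$ one computes $f_1 \ge \tfrac{3n(n-1)}{2} \ge 2n$ for all $n \ge 3$. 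Second, the $1$-skeleton of $\Ind(G)$ must have no bipartite connected component; but this $1$-skeleton is the complement graph $\overline{G}$, and since no pair $y_i y_j$ with $i \ne j$ is an edge of $G$, the vertices $\{y_1, \ldots, y_n\}$ span a clique $K_n$ in $\overline{G}$, producing an odd cycle for $n \ge 3$. Moreover each $x_i$ is adjacent in $\overline{G}$ to every $y_j$ with $j \ne i$, so $\overline{G}$ is connected, and we conclude that $\overline{G}$ is a single non-bipartite component. The generalized first bullet of \cref{thm.daonair} then gives that $\times \ell : A_1 \to A_2$ is injective, hence of maximal rank.

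The only non-mechanical step is invoking the correct characteristic-$\neq 2$ extension of \cref{thm.daonair} from \cite{TH2023}*{Theorem 58}; once that generalization is granted, both the pseudo-manifold input from \cref{thm.mainthm} and the elementary complement-graph verification above are routine.
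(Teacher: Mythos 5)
Your proposal is correct in substance, but it reaches the conclusion by a somewhat different route than the paper, and the difference is worth spelling out. The paper does not apply a characteristic-$\neq 2$ version of \cref{thm.daonair} as a black box. For $i=n-1$ it writes down the $0$--$1$ matrix $M$ of $\times\ell:A_{n-1}\to A_n$, observes that the pseudo-manifold property of $\Ind(G)$ forces every column sum to be $1$ or $2$, rescales the columns with sum $1$ by $2$ (harmless since $\cha(\K)\neq 2$), and then invokes \cite{TH2023}*{Corollary 41} to reduce full rank in characteristic $p$ to full rank in characteristic $0$ together with the inequality $f_{n-2}\geq f_{n-1}$; the characteristic-$0$ full rank is then supplied by \cref{cor.maincor}. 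For $i=1$ it similarly cites \cite{TH2023}*{Theorem 54} to transfer full rank from characteristic $0$. So in the paper the pseudo-manifold structure and the hypothesis $\cha(\K)\neq 2$ enter through an explicit column-sum normalization, whereas you push all of that into a single citation of \cite{TH2023}*{Theorem 58}. That is defensible --- the introduction of the paper itself describes Theorem 58 as the characteristic-$\neq 2$ generalization of Dao--Nair --- but it is the one place where your argument is leaning on a reference to do strictly more than you verify; if Theorem 58 covers only one of the two bullets of \cref{thm.daonair}, your degree-$1$ case would need to fall back on something like \cite{TH2023}*{Theorem 54} exactly as the paper does. Your supplementary verifications are correct and are pleasant additions: the count $f_1=\binom{2n}{2}-|E(G)|\geq \tfrac{3n(n-1)}{2}\geq 2n=f_0$ for $n\geq 3$ gives the dimension inequality directly (the paper instead deduces $f_0\leq f_1$ from the characteristic-$0$ injectivity of $\times\ell:A_1\to A_2$), and the observation that the $1$-skeleton of $\Ind(G)$ is the complement graph, which contains the clique on $\{y_1,\dots,y_n\}$ and is connected, cleanly rules out bipartite components. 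Your handling of $n\leq 2$ is also fine, though for $n=2$ it is simpler to note that $i=1=n-1$ is already covered by the top-degree case, which is what the paper does.
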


\begin{proof}
We first consider the multiplication map $\times \ell: A_{n - 1} \to A_n$
for all $n \geq 2$. 
Since for every $m \in A_{n - 1}$ we have
    $$
        \ell m = \sum_{x_i m \neq 0 \text{ in $A$}} x_i m
    $$
 we see that the multiplication map $\times \ell: A_{n - 1} \to A_n$ is represented by a matrix $M$ that only has $0$ and $1$ as entries. Since $\Ind(G)$ is a pseudo-manifold, and since the nonzero monomials of degree $i$ correspond to faces of dimension $i - 1$ of $\Ind(G)$, we know the sum of the entries in each column of $M$ is $1$ or $2$. 
 
  Let $N$ be the matrix obtained from $M$ by multiplying a column by $2$ if the sum of its entries is $1$. Because the characteristic of $\K$ is not $2$, this operation does not affect the rank. By \cite{TH2023}*{Corollary 41} the matrix $N^\top$, and hence also $N$, has full rank if and only if it has full rank in characteristic zero and $N^\top$ has at least as many rows as columns. By \Cref{cor.maincor} $M$, and hence $N$, has full rank when the characteristic of $\K$ is zero, so it remains to verify that $f_{n-2} \ge f_{n-1}$, or equivalently, 
  $\dim_\K A_{n-1} \ge \dim_\K A_n$.  
  This follows from the fact that $\Ind(G)$ is connected, and every $(n-1)$-face contains $n$ faces of dimension $(n - 2)$.

We now consider the multiplication map $\times \ell: A_{1} \to A_2$
for all $n \geq 2$.
Because the case $n=2$ is covered
by the previous case, we assume
$n \geq 3$. As shown in the proof
of \cref{cor.maincor}, we
have $f_0 \leq f_1$ for $\Ind(G)$.  By \cite{TH2023}*{Theorem 54} the map $\times \ell: A_1 \to A_2$ has full rank if it does so when the characteristic of $\K$ is zero. This is indeed the case by \Cref{cor.maincor}.
\end{proof}

Next we give an example demonstrating  why we need to avoid characteristic $2$ in \Cref{c:char}. In general, each prime characteristic $p$ results in the failure of injectivity in the maps going to degrees divisible by $p$, see ~\cite[Section~5]{TH2023}. 

\begin{example}  
    Let $H$ be a complete graph on $3$ vertices with edges $\{x_1,x_2\}$, $\{x_1,x_3\}$, and $\{x_2,x_3\}$. Let $G = w(H)$, $A = A(G)$ the Artinian algebra of $G$ over a field $\K$ of characteristic $2$ and $\ell$ the sum of the variables. Then the map $\times \ell: A_1 \to A_2$ is represented by the following matrix:

$$
    \begin{blockarray}{ccccccc}
        & x_1 & x_2 & x_3 & y_1 & y_2 & y_3 \\
      \begin{block}{c(cccccc)}
        x_1y_2 & 1 & 0 & 0 & 0 & 1 & 0 \\
        x_1y_3 & 1 & 0 & 0 & 0 & 0 & 1 \\
        x_2y_1 & 0 & 1 & 0 & 1 & 0 & 0 \\
        x_2y_3 & 0 & 1 & 0 & 0 & 0 & 1 \\
        x_3y_1 & 0 & 0 & 1 & 1 & 0 & 0 \\
        x_3y_2 & 0 & 0 & 1 & 0 & 1 & 0 \\
        y_1y_2 & 0 & 0 & 0 & 1 & 1 & 0 \\
        y_1y_3 & 0 & 0 & 0 & 1 & 0 & 1 \\
        y_2y_3 & 0 & 0 & 0 & 0 & 1 & 1 \\
      \end{block}
    \end{blockarray}.
$$
Notice that the sum of every row is $0$, since char $\K = 2$. Adding the first $5$ columns to the last column gives us the matrix
$$
    \begin{pmatrix}
      1&0&0&0&1&0\\
      1&0&0&0&0&0\\
      0&1&0&1&0&0\\
      0&1&0&0&0&0\\
      0&0&1&1&0&0\\
      0&0&1&0&1&0\\
      0&0&0&1&1&0\\
      0&0&0&1&0&0\\
      0&0&0&0&1&0
    \end{pmatrix}
$$
so the map does not have full rank when char $\K = 2$.
\end{example}

The example below shows that the hypothesis $|E(G)| \geq n+1$ cannot be dropped from the previous statements.

\begin{example}
Let $H$ be the graph consisting of two isolated vertices
$V = \{x_1,x_2\}$. The whiskered graph $G = w(H)$ has two disjoint edges, namely $E(G) = \{\{x_1,y_1\},\{x_2,y_2\}\}$,
but does not satisfy $|E(G)| \geq  
2+1$.
The facets of $\Ind(G)$ are 
$$\{\{x_1,x_2\}, \{x_1,y_2\},\{y_1,x_2\},\{y_1,y_2\}\}.$$
While $\Ind(G)$ is a pseudo-manifold, it does not have a boundary since every face of dimension
$\dim(\Ind(G))-1 = 0$ occurs in two facets.  So
the hypothesis $|E(G)| \geq n+1$ in  \cref{thm.mainthm} cannot
be dropped.  Furthermore, using {\it Macaulay2} \cite{M2},
it can be shown that $\times \ell: A(G)_1 \rightarrow A(G)_2$  does not have maximal rank, so we also need
$|E(G)| \geq n+1$ in \cref{cor.maincor}.
\end{example}

As we will see in the next section, we cannot
improve \cref{cor.maincor} to degrees $2$ and
$n-2$ for all whiskered graphs.


\section{Illustrative examples}

We conclude this note with some illustrative examples.
These examples show that some of the conclusions of \cref{cor.maincor} cannot
be improved for whiskered graphs.  Moreover, they also show that for some natural families of graphs one may wish to consider, the WLP fails.

We first define a broom graph.  Let $m \geq 1$ be an
integer.  The {\it broom graph} $B_m$ is a graph
on $m+3$ vertices $V(G) = \{x_1,x_2,x_3,x_4,\ldots,x_{m+3}\}$ with edge set
$$E(G) = \{\{x_1,x_2\},\{x_2,x_3\}\} \cup \{\{x_3,x_i\}~
|~ i \in {4,\ldots,m+3}\}.$$
Note that our definition of a broom graph is similar
to that \cites{BHO2011,BHO2012} which was defined for
directed graphs.   Our main example is based upon
whiskering a broom graph.

\begin{example}
We consider the broom graph $B_5$ and its 
whiskered graph $G=w(B_5)$;  these graphs are shown in \cref{fig:broom}.   Using {\it Macaulay2}~\cite{M2} to compute the Hilbert function
of the graded Artinian ring $A=A(G)$ we obtain:
$$\begin{array}{c|cccccccccccc} 
i & 0 & 1 & 2 &3 &4 & 5 & 6 & 7 &8 & 9 & \\
\hline
H_{A}(i) &1&16 & 105& 380& 840& 1167& 996& 477& 98 & 0
\end{array}$$

Since $B_5$ is a graph on $|V(B_5)|=8$ vertices, 
\cref{cor.maincor} implies that the map
$\times \ell:A_7 \rightarrow A_8$ has maximal rank (where $\ell$ is the sum of the variables of $A$).  Moreover, the ranks of the maps are:

$$\begin{array}{c|ccccccccccc} 
\times \ell  & 0 \to 1& 1 \to 2& 2 \to 3 &3 \to 4 &4 \to 5& 5 \to 6& 6 \to 7& 7 \to 8\\
\hline
\mbox{rank} &1& 16& 105& 380& 826& 922& 475& 98 &\\
\hline
\mbox{full rank?} &\mbox{yes}& \mbox{yes}& \mbox{yes}& \mbox{yes}& \mbox{no}& \mbox{no}& \mbox{no}& \mbox{yes.} &
\end{array}$$

Note that this example shows that we cannot improve \cref{cor.maincor} to show that the map $\times \ell:A(G)_{i} \rightarrow A(G)_{i+1}$, for $i=\lceil n/2 \rceil$ or $n-2$, has maximal rank for all whiskered graphs $G$ with $2n$ vertices.  Indeed, the above table 
shows that  the map $\times \ell:A_6 \rightarrow A_7$ fails to be surjective, and that $\times \ell: A_4 \to A_5$ fails to be injective. 
\end{example}

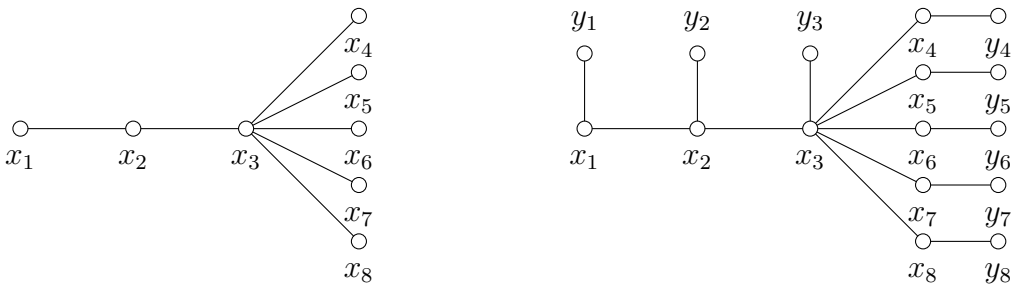
\begin{figure}[h]
\begin{tikzpicture}[scale=0.5]

\draw (0,0) -- (3,0) -- (6,0) -- (9,3);
\draw (6,0) -- (9,1.5);
\draw (6,0) -- (9,0);
\draw (6,0) -- (9,-1.5);
\draw (6,0) -- (9,-3);

\draw (15,0) -- (18,0) -- (21,0) -- (24,3);
\draw (21,0) -- (24,1.5);
\draw (21,0) -- (24,0);
\draw (21,0) -- (24,-1.5);
\draw (21,0) -- (24,-3);

\draw (15,0) -- (15,2);
\draw (18,0) -- (18,2);
\draw (21,0) -- (21,2);
\draw (24,3) -- (26,3);
\draw (24,1.5) -- (26,1.5);
\draw (24,0) -- (26,0);
\draw (24,-1.5) -- (26,-1.5);
\draw (24,-3) -- (26,-3);


\fill[fill=white,draw=black] (0,0) circle (.2)
node[label=below:$x_1$] {};
\fill[fill=white,draw=black] (3,0) circle (.2)
node[label=below:$x_2$] {};
\fill[fill=white,draw=black] (6,0) circle (.2)
node[label=below:$x_3$] {};
\fill[fill=white,draw=black] (9,3) circle (.2)
node[label=below:$x_4$] {};
\fill[fill=white,draw=black] (9,1.5) circle (.2)
node[label=below:$x_5$] {};
\fill[fill=white,draw=black] (9,0) circle (.2)
node[label=below:$x_6$] {};
\fill[fill=white,draw=black] (9,-1.5) circle (.2)
node[label=below:$x_7$] {};
\fill[fill=white,draw=black] (9,-3) circle (.2)
node[label=below:$x_8$] {};

\fill[fill=white,draw=black] (15,0) circle (.2)
node[label=below:$x_1$] {};
\fill[fill=white,draw=black] (18,0) circle (.2)
node[label=below:$x_2$] {};
\fill[fill=white,draw=black] (21,0) circle (.2)
node[label=below:$x_3$] {};

\fill[fill=white,draw=black] (15,2) circle (.2)
node[label=above:$y_1$] {};
\fill[fill=white,draw=black] (18,2) circle (.2)
node[label=above:$y_2$] {};
\fill[fill=white,draw=black] (21,2) circle (.2)
node[label=above:$y_3$] {};
\fill[fill=white,draw=black] (24,3) circle (.2)
node[label=below:$x_4$] {};
\fill[fill=white,draw=black] (24,1.5) circle (.2)
node[label=below:$x_5$] {};
\fill[fill=white,draw=black] (24,0) circle (.2)
node[label=below:$x_6$] {};
\fill[fill=white,draw=black] (24,-1.5) circle (.2)
node[label=below:$x_7$] {};
\fill[fill=white,draw=black] (24,-3) circle (.2)
node[label=below:$x_8$] {};
\fill[fill=white,draw=black] (26,3) circle (.2)
node[label=below:$y_4$] {};
\fill[fill=white,draw=black] (26,1.5) circle (.2)
node[label=below:$y_5$] {};
\fill[fill=white,draw=black] (26,0) circle (.2)
node[label=below:$y_6$] {};
\fill[fill=white,draw=black] (26,-1.5) circle (.2)
node[label=below:$y_7$] {};
\fill[fill=white,draw=black] (26,-3) circle (.2)
node[label=below:$y_8$] {};

\end{tikzpicture}
    \caption{The broom graph
    $B_5$ and its whiskered graph
    $w(B_5)$}
    \label{fig:broom}
\end{figure}

\begin{example}
The broom graph $B_1$ is the same as the path
on four vertices.  The Hilbert function
of the graded Artinian ring $A=A(w(B_1))$ is given
by
$$\begin{array}{c|cccccc} 
i & 0 & 1 & 2 & 3 & 4 & 5  \\
\hline
H_{A}(i) &1&8 & 21& 22& 8 & 0
\end{array}$$
 The map $\times \ell:A_2 \rightarrow A_3$ is not covered by Corollary \ref{cor.maincor}, and we have verified by computer assisted computations that this map indeed does not have maximal rank. 
\end{example}

    Recall that our motivating problem, \cref{question}, asks if there are 
    families of graphs for which $A(G)$ always
    have the weak Lefschetz property.  Our previous
    example shows that such a family of graphs cannot be
    all bipartite graphs or all chordal graphs (a graph where the only induced cycles are triangles), since
    $w(B_5)$ is both a bipartite and chordal graph. Even the family of unmixed trees does not always satisfy the WLP.  In addition, combinatorial commutative algebra has introduced families of graphs whose independence 
    complexes have especially nice combinatorial
    properties, e.g., Cohen-Macaulay, shellable,
    vertex decomposable (see \cite{MV2012} for details 
    and definitions). Since all whiskered graphs 
    are vertex decomposable by
    \cite{DE2009}*{Theorem 4.4} (and thus shellable
    and Cohen-Macaulay), our previous example also shows
    that these families of graphs do not always satisfy the
    weak Lefschetz property.

\begin{remark}
    From computations in Macaulay2 for $m=1,\ldots,8$, 
    the map 
    $$\times \ell:A(w(B_m))_{m+1} \rightarrow A(w(B_m))_{m+2}$$ always fails maximal rank due to the element $y_1y_3 y_4\dots y_{m + 3} \in A(w(B_m))_{m+2}$  not being in the image of the multiplication map. 
We conjecture that this is the case for all $m\geq 1$, and we are currently working on verifying this observation.
\end{remark}

Lastly, we give an example of a whiskered graph that satisfies the weak Lefschetz property. The example below is a whiskering of a complete graph on five vertices. Computational evidence suggests that whiskerings of all complete graphs satisfy WLP. We are working on a proof of this statement.

\begin{example}
    Let $\K$ be a field of characteristic $0$ and $G = w(K_5)$, as shown in
    Figure \ref{fig:whiskerk5}.
\begin{figure}[h]
    \begin{tikzpicture}[scale=0.5]

    \draw (18,0) -- (18,3) -- (21,3) -- (21,0) -- (18,0) -- (21,3);
    \draw (15,0) -- (24,0);
    \draw (15,3) -- (24,3);
    \draw (18,3) -- (21, 0);
    \draw (19.5,4.5) -- (22.5,4.5);
    \draw (18,3) -- (19.5, 4.5);
    \draw (19.5, 4.5) -- (21, 3);
    \draw (21, 3) -- (19.5, 4.5);
    \draw (18, 0) -- (19.5, 4.5);
    \draw (21, 0) -- (19.5, 4.5);
    
    \fill[fill=white,draw=black] (15,3) circle (.2)
    node[label=above:$y_1$] {};    
    \fill[fill=white,draw=black] (15,0) circle (.2)
    node[label=below:$y_2$] {};
    \fill[fill=white,draw=black] (24,0) circle (.2)
    node[label=below:$y_3$] {};
    \fill[fill=white,draw=black] (24,3) circle (.2)
    node[label=above:$y_4$] {};
    \fill[fill=white,draw=black] (22.5,4.5) circle (.2)
    node[label=above:{$y_5$}] {};

    \fill[fill=white,draw=black] (18,3) circle (.2)
    node[label=above:{$x_1$}] {};
    \fill[fill=white,draw=black] (18,0) circle (.2)
    node[label=below:{$x_2$}] {};
    \fill[fill=white,draw=black] (21,0) circle (.2)
    node[label=below:$x_3$] {};
    \fill[fill=white,draw=black] (21,3) circle (.2)
    node[label=above:$x_4$] {};
    \fill[fill=white,draw=black] (19.5,4.5) circle (.2)
    node[label=above:{$x_5$}] {};
    
    \end{tikzpicture}
        \caption{The graph $G = w(K_5)$}
        \label{fig:whiskerk5}
    \end{figure}
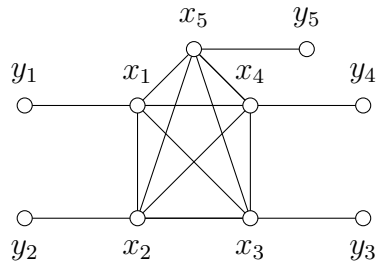
    Using Macaulay2, it can be shown that $A(G)$ satisfies the WLP.
\end{example}

\begin{remark} 
The whiskering construction of graphs has been generalized in several different ways by Biermann--Van Tuyl \cite{BVT2013}, 
Cook--Nagel \cite{CN2012}, and Faridi \cite{Fa2005}. 
We are currently exploring (\cite{grafting}) if these constructions also allow us to construct simplicial complexes 
that are pseudo-manifolds (perhaps with boundary), thus
providing a deeper generalization \cref{thm.mainthm}.
\end{remark}

\noindent
{\bf Acknowledgments.}
Work on this project began at the
\say{Workshop on Lefschetz Properties in Algebra, Geometry, Topology and Combinatorics}, held at The Fields Institute for Research in Mathematical Sciences, Toronto, Canada in May 2023.
We thank The Fields Institute for its hospitality, Eran Nevo  for insightful conversations, and Satoshi Murai for pointing us to Hausel's lemma.  We also thank the anonymous referee for their useful comments.  
 Cooper's research is supported by NSERC Discovery Grant 2018-05004.
Faridi's research is supported by
NSERC Discovery Grant 2023-05929.
Nicklasson's research is supported by the grant KAW-2019.0512.
Van Tuyl’s research is supported by NSERC Discovery Grant 2019-05412.  

\end{document}